\newsavebox\myboxA
\newsavebox\myboxB
\newlength\mylenA
\newcommand\encircle[1]{%
  \tikz[baseline=(X.base)] 
    \node (X) [draw, shape=circle, inner sep=0] {\strut #1};}
\newcommand*\xoverline[2][0.75]{%
    \sbox{\myboxA}{$\m@th#2$}%
    \setbox\myboxB\null% Phantom box
    \ht\myboxB=\ht\myboxA%
    \dp\myboxB=\dp\myboxA%
    \wd\myboxB=#1\wd\myboxA% Scale phantom
    \sbox\myboxB{$\m@th\overline{\copy\myboxB}$}%  Overlined phantom
    \setlength\mylenA{\the\wd\myboxA}%   calc width diff
    \addtolength\mylenA{-\the\wd\myboxB}%
    \ifdim\wd\myboxB<\wd\myboxA%
       \rlap{\hskip 0.5\mylenA\usebox\myboxB}{\usebox\myboxA}%
    \else
        \hskip -0.5\mylenA\rlap{\usebox\myboxA}{\hskip 0.5\mylenA\usebox\myboxB}%
    \fi}
\newtheorem{theorem}{Theorem}[section]
\newtheorem{proposition}[theorem]{Proposition}
\newtheorem{remark}[theorem]{Remark}
\newtheorem{lemma}[theorem]{Lemma}
\newtheorem{example}[theorem]{Example}
\begin{document}

\title[Maximal $(2k-1,2k+1)$-cores and $(2k-1,2k,2k+1)$-cores]{A Combinatorial proof of a Relationship Between Maximal $(2k-1,2k+1)$-cores and $(2k-1,2k,2k+1)$-cores}
\author[R. Nath]{Rishi Nath}
\address{Department of Mathematics} 
%\address{Massachusetts Institute of Technology, Cambridge MA 02139}
%\address{rnath@mit.edu}
\address {York College, City University of New York, Jamaica, NY 11451}
\address{rnath@york.cuny.edu}
\author[J. A. Sellers]{James A. Sellers}
\address{Department of Mathematics, Penn State University, University Park, PA  16802}
\address {sellersj@psu.edu}

%\thanks{}

\date{\bf\today}

\begin{abstract}
Integer partitions which are simultaneously $t$--cores for distinct values of $t$ have attracted significant interest in recent years.  When $s$ and $t$ are relatively prime, Olsson and Stanton have determined the size of the maximal $(s,t)$-core $\kappa_{s,t}$.  When $k\geq 2$, a conjecture of Amdeberhan on the maximal $(2k-1,2k,2k+1)$-core $\kappa_{2k-1,2k,2k+1}$ has also recently been verified by numerous authors.

In this work, we analyze the relationship between maximal $(2k-1,2k+1)$-cores and maximal $(2k-1,2k,2k+1)$-cores. In previous work, the first author noted that, for all $k\geq 1,$
$$
\vert \, \kappa_{2k-1,2k+1}\, \vert = 4\vert \, \kappa_{2k-1,2k,2k+1}\, \vert
$$
and requested a combinatorial interpretation of this unexpected identity.  Here, using the theory of abaci, partition dissection, and elementary results relating triangular numbers and squares, we provide such a combinatorial proof. 
\end{abstract}

\maketitle

% \bigskip

\noindent 2010 Mathematics Subject Classification: 05A17 
\bigskip

\noindent Keywords: Young diagrams; symmetric group; $p$-cores; abaci; triangular numbers

\section{Introduction}
\label{intro}
A {\it partition} $\lambda$ of the positive integer $n$ is a weakly decreasing sequence of positive integers which sum to $n.$  Each of the integers which make up the partition is known as a {\it part} of the partition.  For example, $(8,6,5,5,3,2,2,2,1)$ is a partition of the integer 34.  At times, we will employ an ``exponential'' notation when writing such partitions (in order to shorten the notation).  Thus, an alternative way to write the partition above, using this exponential notation, is 
$(8,6,5^2,3,2^3,1).$   

A {\it Young diagram} (or {\it Ferrers diagram}) is a pictorial representation of a partition.  Simply put, it is a finite collection of boxes which are arranged in left-justified rows with the row lengths weakly decreasing (since each row of the Young diagram corresponds to a part in the partition).  For example, the Young diagram of $(8,6,5^2,3,2^3,1)$  is given in Figure \ref{young_diagram_example}.
{\scriptsize
\begin{center}
\begin{figure}[h1]
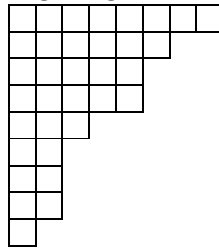

\label{young_diagram_example}
\hspace{-1cm}
\caption{Young diagram of  $(8,6,5^2,3,2^3,1)$}
\Yvcentermath1
$\yng(8,6,5,5,3,2,2,2,1)$
\end{figure}
\end{center}
} %end of scriptsize
To each box in the Young diagram of $\lambda$ we assign a {\it hook}, which is the set of boxes in the same row and to the right, and in the same column and below, as well as the box itself.  The number of boxes is the {\it hook length} of that hook. The {\it first-column hook lengths} are those that appear in the left-most column of the Young diagram.  One obtains the {\it abacus diagram} of $\lambda$ by placing a bead on every value of ${\mathbb N}$ where a first-column hook length occurs. Positions without beads are called {\it spacers}. The {\it t-abacus} of $\lambda$ arises when the intervals $[0,t-1],[t,2t-1],[2t,3t-1],\cdots$ from the abacus of $\lambda$ are stacked on top of each other, forming $t$ runners.

A {\it t-core partition} (or simply $t$-core) of $n$ is a partition in which no hook of length $t$ appears in the Young diagram. A partition is a $t$-core if and only if its $t$-abacus has the property that no spacer occurs below a bead in any runner.  If a partition is not a $t$-core partition, one can obtain the {\it t-core of} $\lambda$ by removing a sequence of $t$-hooks until one is left with a partition $\lambda^{(t)}$ from which no $t$-hooks can be removed.  This is interpreted on the $t$-abacus of $\lambda$ by pushing down beads in each runner as far down as they can go. One then recovers the first-column hook lengths of $\lambda^{(t)}$ by unfolding the $t$-abacus back into an abacus and labeling zero as the first spacer.  This also shows that the $t$-core of $\lambda$ is unique.

When $t$ is prime, the $t$-cores label the $t$-defect zero blocks of $S_n,$ the symmetric group on $n$ letters.  Among other reasons, this makes the $t$-core partitions a significant set of combinatorial objects in the representation theory of the symmetric group.   

A partition $\lambda_{(\gamma)}$ can be read off of each runner $\gamma$ of  the $t$-abacus of $\lambda$ by considering each runner as its own abacus, marking the first spacer as zero and counting upwards. The positions which contain beads are then the first column hook lengths of $\lambda_{(\gamma)}.$ We call the sequence $(\lambda_{(0)},\cdots,\lambda_{(t-1)})$ the {\it t-quotient of} $\lambda$.

It is known (see, for example, \cite[Chapter 2]{O}) that the size of $\lambda,$ that is, the integer which $\lambda$ partitions, which we will denote by $\vert\, \lambda\,\vert,$ is simply the sum of the size of the $t$-core of $\lambda$ and $t$ times the size of all the partitions in the $t$-quotient of $\lambda$ (for any $t\geq 2$).   
\begin{equation}\label{corequo} |\lambda|=|\lambda^{(t)}|+\sum_{0\leq\gamma\leq t-1}|\lambda_{(\gamma)}|
\end{equation}

In this article, the 2-cores will play an especially important role.  These are the partitions of the form $\tau_j := (j, j-1, j-2, \dots, 2, 1)$ for some $j\geq 0.$   (We define $\tau_0 = \emptyset.$)   For each $j,$ the size of the partition $\tau_j$ is simply $T_j :=\frac{j(j+1)}{2},$ the $j^{th}$ triangular number.  

In recent years, the study of $t$--cores has expanded to include consideration of partitions which are simultaneously cores for various values of $t.$   The field began in 2002 when Anderson \cite{A} enumerated $(s,t)$-cores in the case when $s$ and $t$ are relatively prime. Subsequently, the work of Olsson and Stanton \cite{O-S} and others showed that, when gcd$(s,t)=1$, there is a unique $(s,t)$-core of largest size, denoted by $\kappa_{s,t}$, whose Young diagram contains the diagrams of all other $(s,t)$-cores. We call such a simultaneous core of largest size {\it maximal}.   
\begin{theorem} \label{st} Let gcd$(s,t)=1.$ Then there is a unique maximal $(s,t)$-core $\kappa_{s,t}$ which contains all others such that
\begin{equation*}
|\kappa_{s,t}|=\frac{(s^2-1)(t^2-1)}{24}.
\end{equation*}
\end{theorem}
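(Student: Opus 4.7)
The plan is to use the abacus machinery from Section \ref{intro} to identify the set of first-column hook lengths of the sought partition explicitly, and then compute its size directly from those bead data. Writing $B(\lambda)$ for the set of first-column hook lengths, the abacus translation of the core condition is that $\lambda$ is a $u$-core iff $B(\lambda)$ is closed under $n \mapsto n-u$ whenever $n > u$; this is precisely the ``no spacer below a bead'' condition on the $u$-abacus stated in the introduction. Consequently $\lambda$ is an $(s,t)$-core iff $B(\lambda)$ is closed under both $n \mapsto n-s$ and $n \mapsto n-t$.

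The key observation is that any such $B(\lambda)$ must avoid the numerical semigroup $\langle s,t\rangle := \{as+bt : a,b \geq 0\}$: if some $n = as+bt \in B(\lambda)$ with $a+b \ge 1$, iterating the two closure conditions would eventually force $0 \in B(\lambda)$, which is impossible since first-column hook lengths are positive. Hence $B(\lambda) \subseteq N_{s,t} := \mathbb{Z}_{>0} \setminus \langle s,t\rangle$, the gap set. Conversely, $N_{s,t}$ is itself closed under both subtractions---if $n \in N_{s,t}$ but $n - s \in \langle s,t\rangle$, then $n \in \langle s,t\rangle$, a contradiction---so $N_{s,t}$ is the first-column hook-length set of a genuine $(s,t)$-core, which I call $\kappa_{s,t}$. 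Since every other $(s,t)$-core has its bead set contained in $N_{s,t}$, its Young diagram is contained in that of $\kappa_{s,t}$, giving both existence and uniqueness of the maximum.

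To compute $|\kappa_{s,t}|$, I would use the standard identity $|\lambda| = \sum_{b \in B(\lambda)} b - \binom{|B(\lambda)|}{2}$ that reads off the size of a partition from its first-column hook lengths. Sylvester's classical count gives $|N_{s,t}| = (s-1)(t-1)/2$, and an Ap\'ery-set decomposition relative to $s$ expresses each gap uniquely as $n = it - ks$ with $1 \le i \le s-1$, $1 \le k \le \lfloor it/s\rfloor$; summing the arithmetic progression within each residue class and invoking the coprimality identity $\sum_{i=1}^{s-1} \lfloor it/s\rfloor = (s-1)(t-1)/2$ produces
\begin{equation*}
\sum_{n \in N_{s,t}} n \;=\; \frac{(s-1)(t-1)(2st-s-t-1)}{12}.
\end{equation*}
Substituting this and $\binom{(s-1)(t-1)/2}{2}$ into the bead identity and factoring $(s-1)(t-1)$ out of the difference yields $|\kappa_{s,t}| = (s^2-1)(t^2-1)/24$.

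The main obstacle is the sum-of-gaps evaluation. The simpler Frobenius symmetry $n \mapsto (st-s-t)-n$ pairs gaps with small semigroup elements but cancels out of the sum, recovering only $|N_{s,t}|$; a genuine second-moment computation (such as the Ap\'ery-class bookkeeping above, which hinges on evaluating $\sum_{i=1}^{s-1} it \lfloor it/s\rfloor$) is required. Once that formula is in hand, the passage to $(s^2-1)(t^2-1)/24$ is purely algebraic.
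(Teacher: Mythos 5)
First, note that the paper does not prove Theorem \ref{st} at all; it is quoted from Olsson and Stanton \cite{O-S}, so there is no internal proof to compare against. Your route --- identify the first-column hook lengths of the maximal core with the gap set $N_{s,t}$ of the numerical semigroup $\langle s,t\rangle$, then evaluate $|\lambda|=\sum_{b\in B(\lambda)}b-\binom{|B(\lambda)|}{2}$ --- is the standard modern approach, and the size computation is sound: the gap-sum formula $\tfrac{1}{12}(s-1)(t-1)(2st-s-t-1)$ is correct (for $(s,t)=(3,5)$ the gaps $1,2,4,7$ sum to $14$, as it predicts), and the final algebra does reduce to $(s^2-1)(t^2-1)/24$. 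One small correction: the closure condition must read ``$n-u\in B(\lambda)$ whenever $n\in B(\lambda)$ and $n\ge u$,'' not $n>u$; with strict inequality $B(\lambda)=\{s\}$ would vacuously qualify even though $(s)$ has a hook of length $s$. Your descent-to-$0$ argument silently uses the correct version.

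The genuine gap is the maximality step. From $B(\mu)\subseteq N_{s,t}=B(\kappa_{s,t})$ you conclude that the Young diagram of $\mu$ sits inside that of $\kappa_{s,t}$, but containment of first-column hook-length sets does not imply containment of Young diagrams: $B\bigl((3)\bigr)=\{3\}\subseteq\{3,1\}=B\bigl((2,1)\bigr)$, yet $(3)\not\subseteq(2,1)$. The translation from bead set to parts, $\mu_j=g_j-(m-j)$, depends on the cardinality $m$ of the bead set, so deleting beads can make individual parts \emph{larger}. The containment claim is true for $(s,t)$-cores, but establishing it requires exploiting that both bead sets are closed under subtraction of $s$ and of $t$ (equivalently, that both are order ideals in the gap poset); this is precisely the substantive part of the Olsson--Stanton theorem and cannot be dispatched in one line. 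Since the present paper only ever uses the size formula, this gap would not propagate, but it does leave the ``contains all others'' clause unproved. Finally, the gap-sum evaluation you identify as the main obstacle is asserted rather than derived; the stated formula is correct, but the Ap\'ery-class bookkeeping (in particular $\sum_{i=1}^{s-1} it\lfloor it/s\rfloor$) would still need to be carried out to call the proof complete.
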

In this note, we will focus our attention on two families of cores.  The first is the set of simultaneous $(2k-1,2k+1)$-cores of maximal size, which we denote by $\kappa_{s-1,s+1}$. Such cores were studied first by Amdeberhan and Leven \cite{AL}. From Theorem \ref{st}, we know 
\begin{equation}
\label{mainLHS}
\vert\, \kappa_{2k-1,2k+1}\,\vert = \frac{4k^2(k+1)(k-1)}{6}
\end{equation}
Furthermore, from analysis on $2k$-abaci done in \cite{Nath}, we have the following two remarks.
\begin{remark}
\label{rem1}
The $2k$-core of $\kappa_{2k-1,2k+1}$ is empty.  
\end{remark}

\begin{remark}
\label{rem2}
The $2k$-quotient structure of $\kappa_{2k-1,2k+1}$ consists solely of 2-cores; in particular, it is 
$$  \tau_{k-1}, \tau_{k-2}, \dots, \tau_1, \tau_0, \tau_0, \tau_1, \dots, \tau_{k-2}, \tau_{k-1}.$$
\end{remark}
See Figure 2 for a pictorial view of the $8$-quotient of $\kappa_{7,9}.$  

{\scriptsize
\begin{center}
\begin{figure}[h!]
\label{8quotient}
\hspace{-1cm}
\caption{8-quotient of $\kappa_{7,9}$}
\Yvcentermath1
$\yng(3,2,1)\;, \;\; \yng(2,1)\;,\;\; \yng(1)\;,\;\; \emptyset\;,\;\; \emptyset\;,\;\; \yng(1)\;,\; \;\yng(2,1)\;,\; \;\yng(3,2,1)$
\end{figure}
\end{center}
} %end of scriptsize
Combining (\ref{corequo}) and (\ref{mainLHS}) with  Remarks \ref{rem1} and \ref{rem2}, we obtain another relation: 
$$\vert \, \kappa_{2k-1,2k+1}\,\vert = 2k\sum_{j=0}^{k-1} 2T_j=\frac{4k^2(k+1)(k-1)}{6}.$$

The second family of partitions we are interested in is simultaneous $(2k-1,2k,2k+1)$-cores of maximal size. Let $\kappa_{2k-1,2k,2k+1}$ be such a maximal $(2k-1,2k,2k+1)$-core. Then multiple authors, including Yang, Zhong, and Zhou \cite[Corollary 3.5]{Y-Z-Z}, Xiong \cite[Corollary 1.2]{X} and the first author \cite[$\S$5.1]{Nath} have verified that 
\begin{equation}
\label{mainRHS}
\vert\, \kappa_{2k-1,2k,2k+1}\,\vert = k\binom{k+1}{3} = \frac{k^2(k+1)(k-1)}{6},
\end{equation}
a result originally conjectured by Amdeberhan \cite{Am}. (Note: Unlike the $(2k-1,2k+1)$ case, the maximal $(2k-1,2k,2k+1)$-core is not self-conjugate; hence, there are two $(2k-1,2k,2k+1)$-cores of maximal size.
For the purposes of combinatorial manipulation, we choose $\kappa_{2k-1,2k,2k+1}$ to be the one {\it with the larger number of parts}.)

The first author \cite{Nath} recently noted that, thanks to (\ref{mainLHS}) and (\ref{mainRHS}), we have the following theorem.  

\begin{theorem}
\label{main_result}  
For any $k\geq 1,$ 
\begin{equation*}
\label{main_result_eqn}
\vert\, \kappa_{2k-1,2k+1}\,\vert =4\vert\, \kappa_{2k-1,2k,2k+1}\,\vert. 
\end{equation*}
\end{theorem}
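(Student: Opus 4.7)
The plan is to express both $|\kappa_{2k-1,2k+1}|$ and $|\kappa_{2k-1,2k,2k+1}|$ as natural combinatorial sums whose ratio is visibly $4$, and then upgrade this bookkeeping into a cell-level $4$-to-$1$ map between the two Young diagrams.

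First, I would repackage what the introduction already yields for $|\kappa_{2k-1,2k+1}|$. Remarks \ref{rem1} and \ref{rem2} together with (\ref{corequo}) give
\[
|\kappa_{2k-1,2k+1}| \;=\; 2k\cdot 2\sum_{j=0}^{k-1} T_j \;=\; 4k\sum_{j=0}^{k-1} T_j,
\]
and the proof of this identity is itself a cell dissection: each cell of the $2k$-quotient lifts through the abacus to a $2k$-ribbon inside the Young diagram, and the palindromic structure of the quotient produces two copies of each triangular piece $\tau_j$.

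Second, I would establish the companion identity
\[
|\kappa_{2k-1,2k,2k+1}| \;=\; k\sum_{j=0}^{k-1} T_j
\]
by a direct analysis of the abacus of $\kappa_{2k-1,2k,2k+1}$. Since $\kappa_{2k-1,2k,2k+1}$ is itself a $2k$-core, its $2k$-quotient is trivial and a different abacus must be used; the $(2k-1)$-abacus is natural because $\kappa_{2k-1,2k,2k+1}$ is a $(2k-1)$-core, and it pins down the first-column hook lengths (this is also the step where the preference for the ``more parts'' version of $\kappa_{2k-1,2k,2k+1}$ is used). The description extracted from that abacus should let one dissect the Young diagram into $k$ congruent regions, each composed of nested triangles of sizes $T_0,T_1,\ldots,T_{k-1}$, contributing $\sum_{j=0}^{k-1} T_j$ cells apiece.

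Third, with both Young diagrams now partitioned into triangular pieces indexed by pairs (component $j$, cell of $\tau_j$), the factor of $4$ splits as $2\cdot 2$: one factor of $2$ from the palindromic symmetry of the $2k$-quotient of $\kappa_{2k-1,2k+1}$, and one factor of $2$ from the ratio of ribbon sizes ($2k$ versus $k$). I would then realize this as an explicit $4$-to-$1$ map sending each cell of a $2k$-ribbon of $\kappa_{2k-1,2k+1}$ to the corresponding cell of a $k$-ribbon of $\kappa_{2k-1,2k,2k+1}$. The elementary identity $T_{j-1}+T_j=j^2$ would enter here to repackage adjacent triangular blocks as squares when pairing opposite halves of the palindrome.

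The main obstacle is the second step. Unlike $\kappa_{2k-1,2k+1}$, the partition $\kappa_{2k-1,2k,2k+1}$ does not offer a nontrivial $2k$-quotient to lean on, so one has to pin down its shape honestly and then argue that the resulting Young diagram cleanly dissects into $k$ copies of a single ``tetrahedron'' of triangular numbers in a way that matches the piece-by-piece dissection of $\kappa_{2k-1,2k+1}$ in step one. Making this dissection canonical enough that the $4$-to-$1$ map of step three is evidently bijective (and not merely counting-correct) is where the argument will do its real work.
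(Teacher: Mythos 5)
Your reduction of the theorem to the identity $k\sum_{j=0}^{k-1}T_j = \vert\,\kappa_{2k-1,2k,2k+1}\,\vert$ is exactly the reduction the paper makes (its Theorem \ref{main_result_modified}), and your first step is carried out there in the same way. But the heart of the proof --- your second step --- is not actually done in your proposal; you correctly identify it as ``the main obstacle'' and then leave it there. Two concrete problems. First, you assert that since the $2k$-quotient of $\kappa_{2k-1,2k,2k+1}$ is trivial ``a different abacus must be used'' and propose the $(2k-1)$-abacus; in fact the $2k$-abacus still determines the partition completely (the beads are simply flush against the bottom), and the paper works precisely with the $2k$-abacus $\bar{\alpha}(2k)$, describing it row by row (Theorem \ref{abacus-triple-catalan}) and reading off the explicit shape $\kappa_{2k-1,2k,2k+1}=(\{(k-1)^2\}^2,\{(k-2)^2\}^4,\dots,4^{2k-4},1^{2k-2})$, i.e., the square part $j^2$ occurring with multiplicity $2(k-j)$. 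You never pin down this shape, and without it no cell-level correspondence can be exhibited.

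Second, your proposed dissection of $\kappa_{2k-1,2k,2k+1}$ into $k$ \emph{congruent} regions, each consisting of one triangle of every size $T_0,\dots,T_{k-1}$, is exactly the statement that would need proof, and it is not how the count actually organizes itself. The paper's dissection is asymmetric: the $k$ copies of $Q_{2k-1,2k+1}$ are split into $(T_{k-1}^{\,k-1})$, $(T_{k-1},\dots,T_1)$, and $k-1$ copies of $Q_{2k-3,2k-1}$; the last piece is matched by induction with the copy of $\kappa_{2k-3,2k-2,2k-1}$ sitting inside $\kappa_{2k-1,2k,2k+1}$, and the two leftover pieces are matched with the two extra copies of each square $1,4,\dots,(k-1)^2$ using $T_{n}+T_{n-1}=n^2$ together with the further identity $3(T_1+\cdots+T_n)=nT_{n+1}$. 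That second identity (Proposition \ref{prop2}) is the ingredient your outline is missing entirely: after splitting the leftover squares into triangles one is holding three copies of each of $T_1,\dots,T_{k-2}$ plus a single $T_{k-1}$, and only this identity reassembles them into the $k-1$ copies of $T_{k-1}$ that remain to be accounted for. Until you either prove your congruent-dissection claim or supply something playing the role of that identity, your argument is a consistency check on the two closed-form sizes rather than the combinatorial proof requested.
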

In the same paper, the following question was asked: ``Is there an interpretation (either in the geometry of the $2k$-abacus or in the manipulation of Young diagrams) of the factor of 4 that appears above?''  
The goal of this note is to provide an affirmative answer to this question.  And, indeed, we will utilize both the geometry of the abaci in question as well as manipulation of Young diagrams in a natural way to prove our result.  

\section{A Combinatorial Proof}
In order to provide a purely combinatorial proof of Theorem \ref{main_result}, we wish to obtain a visual representation of  $\kappa_{2k-1,2k+1}$ and $\kappa_{2k-1,2k,2k+1}.$ We do this using the abacus. First we define $\alpha(2k)$ to be the $2k$-abacus of $\kappa_{2k-1,2k+1}$; from this construction arises the content of Remarks \ref{rem1} and \ref{rem2}.  To obtain a useful visual representation of a maximal $(2k-1,2k,2k+1)$-core, we consider $\bar{\alpha}(2k),$ the $2k$-abacus corresponding to the $(2k-1,2k,2k+1)$-core with the most parts. As a consequence of a result of Aggarwal \cite[Corollary 1.4]{Ag}, $\bar{\alpha}(2k)$ corresponds to $\kappa_{2k-1,2k,2k+1}$.  A longer discussion of this and a proof of the following result can be found in \cite[$\S$5.1]{Nath}.  
\begin{theorem}
\label{abacus-triple-catalan}  
The $2k$-abacus $\bar{\alpha}(2k)$ contains exactly $2k$ columns (called runners), which we label 0 to $2k-1$ (where runner 0 is the leftmost column) and exactly $k-1$ rows, which we label from 0 to $k-2$ (where row 0 is at the bottom of the abacus and row $k-2$ is at the top).  For each row $j,$ with $0\leq j\leq k-2,$ as we read from left to right, there are $j+1$ spacers, followed immediately by $2k-2(j+1)$ beads, followed immediately by $j+1$ spacers.  
\end{theorem}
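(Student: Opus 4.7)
My plan is to work entirely on the $2k$-abacus. Since $\kappa_{2k-1,2k,2k+1}$ is in particular a $2k$-core, on its $2k$-abacus every runner has its beads packed at the bottom (no spacer below a bead), so the abacus is encoded by the tuple $(n_0,n_1,\ldots,n_{2k-1})$ of bead counts per runner, with runner $r$ holding beads at rows $0,1,\ldots,n_r-1$. My task reduces to identifying this tuple.

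First I translate the $(2k-1)$- and $(2k+1)$-core conditions into inequalities on the $n_r$. On the $2k$-abacus, removing a $(2k-1)$-hook moves a bead from position $p=2kj+r$ to $p-(2k-1)=2k(j-1)+(r+1)$, i.e.\ ``down one row, right one runner,'' with a wrap sending (row $j$, runner $2k-1$) to (row $j$, runner $0$). Forbidding such moves yields
\[
n_{r+1}\ge n_r-1\quad(0\le r\le 2k-2),\qquad n_0\ge n_{2k-1}.
\]
Symmetrically, the $(2k+1)$-condition corresponds to ``down one row, left one runner,'' with wrap from (row $j$, runner $0$) to (row $j-2$, runner $2k-1$), giving
\[
n_{r-1}\ge n_r-1\quad(1\le r\le 2k-1),\qquad n_{2k-1}\ge n_0-2.
\]
Together these force $|n_r-n_{r+1}|\le 1$ for all $0\le r\le 2k-2$ and $0\le n_0-n_{2k-1}\le 2$. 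In addition, the paper's convention (beads at first-column hook lengths, zero labeled as the first spacer) forces $n_0=0$, and then the wrap inequality $n_{2k-1}\le n_0$ forces $n_{2k-1}=0$ as well.

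Next I pin down the extremal tuple. With $n_0=n_{2k-1}=0$ and $|n_r-n_{r+1}|\le 1$, a telescoping argument yields the pointwise bound $n_r\le \min(r,\,2k-1-r)$, and the symmetric pyramid
\[
(n_0,\ldots,n_{2k-1})=(0,1,2,\ldots,k-1,k-1,k-2,\ldots,1,0)
\]
is the unique admissible tuple saturating this bound at every $r$. Using the bead-position formula
\[
|\lambda|=\sum_r r\,n_r + k\sum_r n_r^2 - kN - \binom{N}{2},\qquad N=\textstyle\sum_r n_r,
\]
for the size of the $2k$-core encoded by $(n_0,\ldots,n_{2k-1})$, a direct computation shows the pyramid has size $k\binom{k+1}{3}$, matching $|\kappa_{2k-1,2k,2k+1}|$ from (\ref{mainRHS}). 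Hence the pyramid encodes a maximal $(2k-1,2k,2k+1)$-core; and since no admissible tuple has a larger $N$ than the pyramid's $N=k(k-1)$, it is a fortiori the maximal core ``with the most parts'' used to define $\bar\alpha(2k)$.

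Finally I unfold the pyramid into the claimed row-by-row picture. Since runner $r$ contains a bead at row $j$ iff $j<n_r$, for the pyramid this reads $j+1\le r\le 2k-j-2$. Row $j$ therefore consists of $j+1$ spacers (runners $0,\ldots,j$), then $2k-2(j+1)$ beads (runners $j+1,\ldots,2k-j-2$), then $j+1$ spacers (runners $2k-j-1,\ldots,2k-1$), and the top occupied row is $j=k-2$, giving exactly $k-1$ rows as asserted. The chief technical hurdle is the second step: one must carefully handle the wrap-around inequalities at runners $0$ and $2k-1$, since they couple the two ends of the abacus and rule out several profiles (for instance, shifted pyramids with $n_0\ge 1$) that would otherwise appear admissible on a linear abacus.
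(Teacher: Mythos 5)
Your proposal is correct, and I verified the key steps: the translation of the $(2k\mp 1)$-core conditions on the $2k$-abacus into $n_{r+1}\ge n_r-1$, $n_{r-1}\ge n_r-1$ together with the wrap inequalities $n_0\ge n_{2k-1}$ and $n_{2k-1}\ge n_0-2$ is accurate (including the wrap rows), the deduction $n_0=n_{2k-1}=0$ from the convention that $0$ is a spacer is sound, the pointwise bound $n_r\le\min(r,2k-1-r)$ follows by telescoping from both ends, and the bead-position size formula does evaluate on the pyramid to $k\binom{k+1}{3}=\tfrac{k^2(k+1)(k-1)}{6}$, matching (\ref{mainRHS}). Note, however, that the paper itself offers no proof of Theorem \ref{abacus-triple-catalan}: it explicitly defers to \cite[\S 5.1]{Nath}, so there is nothing in-text to compare against line by line. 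Your argument is therefore a genuinely self-contained replacement, and its structure is the natural abacus one. Two small remarks: first, since $\bar{\alpha}(2k)$ is \emph{defined} as the abacus of the $(2k-1,2k,2k+1)$-core with the most parts, your observation that the pyramid uniquely maximizes $N=\sum_r n_r$ among admissible tuples already finishes the proof; the size computation is a pleasant consistency check (and independently recovers Amdeberhan's value) but is not logically needed. Second, you are right to emphasize the wrap-around inequalities as the delicate point --- they are exactly what kills shifted profiles with $n_0\ge 1$ --- and your handling of them is correct.
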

Figures 3 and 4 are provided with the goal of clarifying the description in Theorem \ref{abacus-triple-catalan}.  

{\scriptsize
\begin{figure}[h!]
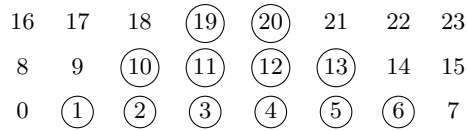

\label{abacus789}
\begin{center}

\[
\begin{array}{cccccccc}
{16} & 17 & 18 & \encircle{19} & \encircle{20} & 21 & 22 & 23\\
8 & {9} & \encircle{10} & \encircle{11} & \encircle{12} & \encircle{13} & 14 & 15 \\
0 & \encircle{1} & \encircle{2} & \encircle{3} & \encircle{4} & \encircle{5} & \encircle{6} & 7\\
\end{array} 
\] 

\caption{The $8$-abacus $\bar{\alpha}(8)$ of $\kappa_{7,8,9}$}

\end{center}
\end{figure}
} %end of scriptsize
{\scriptsize
\begin{figure}[h!]
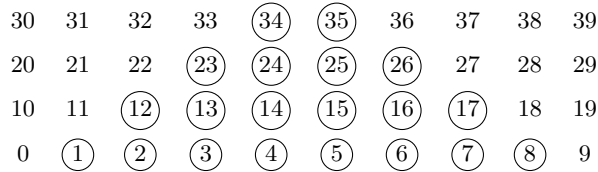

\label{abacus91011}
\begin{center}

\[
\begin{array}{cccccccccc}
{30} & 31 & 32 & 33 & \encircle{34} & \encircle{35} & 36 & 37 & 38 & 39 \\
{20} & 21 & 22 & \encircle{23} & \encircle{24} & \encircle{25} & \encircle{26} & 27 & 28 & 29 \\
10 & {11} & \encircle{12} & \encircle{13} & \encircle{14} & \encircle{15} & \encircle{16} & \encircle{17} & 18 & 19\\
0 & \encircle{1} & \encircle{2} & \encircle{3} & \encircle{4} & \encircle{5} & \encircle{6} & \encircle{7} & \encircle{8} & 9\\
\end{array} 
\] 

\caption{The $10$-abacus $\bar{\alpha}(10)$ of $\kappa_{9,10,11}$}

\end{center}
\end{figure}
} %end of scriptsize

Thanks to Theorem \ref{abacus-triple-catalan} and the wonderful symmetries possessed by these abaci, we can easily prove a characterization of the Young diagram for $\kappa_{2k-1,2k,2k+1}.$ We will need the following result on triangular numbers.  
\begin{proposition}
\label{prop1}
The sum of two consecutive triangular numbers is a square.  That is, for any $n\geq 1,$ $T_n + T_{n-1} = n^2.$  
\end{proposition}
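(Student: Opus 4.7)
The plan is to prove this identity by direct algebraic substitution. Using the closed form $T_m=m(m+1)/2$, we compute
\[
T_n+T_{n-1}=\frac{n(n+1)}{2}+\frac{(n-1)n}{2}=\frac{n\bigl((n+1)+(n-1)\bigr)}{2}=\frac{n\cdot 2n}{2}=n^2,
\]
so the identity drops out in a single line. There is no real obstacle: the content of the proposition is an elementary manipulation, equivalent to the well-known binomial identity $\binom{n+1}{2}+\binom{n}{2}=n^2$.

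Since the rest of the paper is written in a combinatorial style, I would also record the pictorial argument, which is the form most likely to be invoked in the sequel. The staircase partition $\tau_n=(n,n-1,\dots,1)$ has $T_n$ boxes, and the smaller staircase $\tau_{n-1}=(n-1,n-2,\dots,1)$ has $T_{n-1}$ boxes. Rotating the Young diagram of $\tau_{n-1}$ by $180^\circ$ and fitting it against $\tau_n$ along their common hypotenuse yields an $n\times n$ square, exhibiting $T_n+T_{n-1}=n^2$ as an explicit partition dissection. This pairing sits naturally on top of the $2k$-quotient structure of $\kappa_{2k-1,2k+1}$ recorded in Remark \ref{rem2}, so I expect this combinatorial form of the identity to be the mechanism by which the factor of $4$ in Theorem \ref{main_result} is eventually accounted for: the entries $\tau_{k-1},\dots,\tau_0,\tau_0,\dots,\tau_{k-1}$ on the $2k$ runners can be regrouped into $k$ pairs each assembling to a square, and comparing these squares with the staircase/triangular structure on $\bar\alpha(2k)$ should yield the $4{:}1$ ratio.
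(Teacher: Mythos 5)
Your proof is correct and matches the paper's approach: the paper likewise notes that the identity follows from straightforward algebraic manipulation and then gives the same ``proof without words'' dissection of an $n\times n$ square into two interlocking staircases (illustrated for $n=5$). Your closing speculation about how the identity drives the factor of $4$ is extra commentary, not part of the proof, and the paper's actual mechanism is a somewhat more involved three-part dissection, but that does not affect the correctness of your proof of this proposition.
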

\begin{proof}
Of course, one way to prove this proposition is via straightforward algebraic manipulation of the equation.  However, in keeping with the combinatorial spirit of this work, we can see this result via a ``proof without words.'' For example, the statement that $T_5 + T_4 = 5^2$ van be visualized by the following figure:  

{\scriptsize
\begin{center}
\begin{figure}[h1]
\label{5by5}
\hspace{-1cm}
\caption{$T_5 + T_4 = 5^2$}
\Yvcentermath1
$\young(\bullet\bullet\bullet\bullet\bullet,\bullet\bullet\bullet\bullet *,\bullet\bullet\bullet **,\bullet\bullet ***,\bullet ****)$
\end{figure}
\end{center}
}% end of scriptsize
\end{proof}
We will also need the following lemma, which follows immediately from Theorem \ref{abacus-triple-catalan}.
\begin{lemma}\label{append} The $2k$-abacus $\bar{\alpha}(2k)$ is obtained from the $(2k-2)$-abacus $\bar{\alpha}(2k-2)$ by the following three steps:
\begin{enumerate}
\item Append two columns, one to the left and one to right of $\bar{\alpha}(2k-2)$, each consisting of $k-1$ spacers. 
\item Append a row below $\bar{\alpha}(2k-2)$, consisting of one spacer, then $2k-2$ consecutive beads, then one spacer.
\item Renumber the new $2k$-abacus starting with the bottom leftmost spacer being zero, and increasing values as one moves right, and then up. 
\end{enumerate}
\end{lemma}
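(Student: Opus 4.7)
The lemma is essentially a structural bookkeeping statement: both $\bar{\alpha}(2k)$ and $\bar{\alpha}(2k-2)$ are completely determined by Theorem \ref{abacus-triple-catalan}, so the plan is simply to line up their descriptions and check that the three steps interpolate correctly.

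First I would compare the bottom row of $\bar{\alpha}(2k)$ with the row produced by step (2). By Theorem \ref{abacus-triple-catalan} applied with $j=0$, row $0$ of $\bar{\alpha}(2k)$ consists of $1$ spacer, followed by $2k-2$ beads, followed by $1$ spacer — exactly the row that step (2) appends.

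Next I would handle the remaining rows. Fix any $j$ with $1 \leq j \leq k-2$. By Theorem \ref{abacus-triple-catalan}, row $j$ of $\bar{\alpha}(2k)$ has $j+1$ spacers, then $2k-2(j+1)$ beads, then $j+1$ spacers. Setting $j' := j-1$ and applying Theorem \ref{abacus-triple-catalan} to $\bar{\alpha}(2k-2)$ (i.e. replacing $k$ by $k-1$ and using row index $j'$ with $0 \leq j' \leq k-3$), row $j'$ of $\bar{\alpha}(2k-2)$ has $j'+1 = j$ spacers, then $2(k-1)-2(j'+1) = 2k-2(j+1)$ beads, then $j$ spacers. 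Prepending and appending one spacer on each side — exactly what step (1) prescribes — turns this into the row $j$ pattern for $\bar{\alpha}(2k)$. So after steps (1) and (2), the grid of beads and spacers already matches the description of $\bar{\alpha}(2k)$ in Theorem \ref{abacus-triple-catalan} position by position.

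Finally I would address step (3), the renumbering. Steps (1) and (2) only specify the combinatorial pattern (bead vs. spacer) at each grid cell, but a $2k$-abacus requires the integer label $2ki + c$ at row $i$, column $c$ (with $c \in \{0, \ldots, 2k-1\}$). Since the original labels of $\bar{\alpha}(2k-2)$ used strides of $2k-2$ rather than $2k$, the relabelling prescribed in step (3) is precisely the one forced by treating the enlarged grid as a $2k$-abacus with $0$ at the new bottom-left cell. Once the relabelling is done, the cell-by-cell match from the previous paragraph, together with Theorem \ref{abacus-triple-catalan}, identifies the resulting abacus with $\bar{\alpha}(2k)$.

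The only real obstacle is keeping the indexing straight — in particular, remembering that $\bar{\alpha}(2k-2)$ has only $k-2$ rows while the appended side-columns in step (1) are of height $k-1$ (the extra spacer in each column being what ends up at the two ends of the new bottom row after step (2)). Once that index shift is accounted for, the verification is a direct comparison of the two descriptions provided by Theorem \ref{abacus-triple-catalan}.
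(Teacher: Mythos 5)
Your proposal is correct and matches the paper's approach: the paper simply asserts that the lemma ``follows immediately from Theorem \ref{abacus-triple-catalan},'' and your row-by-row comparison (bottom row from step (2), the index shift $j' = j-1$ for the remaining rows, and the forced relabelling) is exactly the verification that assertion leaves implicit. Your remark about the height-$(k-1)$ side columns overlapping the end spacers of the new bottom row is a useful clarification of the lemma's bookkeeping that the paper does not spell out.
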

\begin{example} The 10-abacus $\bar{\alpha}(10)$ is obtained from the 8-abacus $\bar{\alpha}(8)$ by appending two columns of four spacers each, to the left and right of 
$\bar{\alpha}(10)$, appending a row consisting of one spacer, eight beads, and one spacer below $\bar{\alpha}(10)$, and relabeling. See Figure 3 and Figure 4.
\end{example}
We now provide a characterization of the Young diagram of $\kappa_{2k-1,2k,k+1}.$
\begin{theorem}
\label{young-diagram-triple-catalan}  
When written as a partition in the ``exponential'' notation mentioned above, 
$$
\kappa_{2k-1,2k,2k+1} = \left(\left\{(k-1)^2\right\}^2, \left\{(k-2)^2\right\}^4, \dots, 16^{2k-8}, 9^{2k-6}, 4^{2k-4}, 1^{2k-2}\right).
$$
\end{theorem}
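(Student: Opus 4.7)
The plan is to extract the partition $\kappa_{2k-1,2k,2k+1}$ directly from the abacus $\bar{\alpha}(2k)$ described in Theorem \ref{abacus-triple-catalan}. First I would count the total number of beads, which equals the number $r$ of parts: since row $j$ contains $2k-2j-2$ beads for $j=0,\ldots,k-2$, summing gives $r = k(k-1)$. I would then list the bead positions explicitly: because row $j$ begins with $j+1$ spacers, its beads occupy the global positions $2kj + m$ for $m = j+1,\ldots,2k-j-2$, and these positions are exactly the first-column hook lengths of $\kappa_{2k-1,2k,2k+1}$.

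The key step is to recover the parts via the standard conversion $\lambda_i = h_i - r + i$, where $h_1 > h_2 > \cdots > h_r$ are the hook lengths listed in decreasing order. To apply this cleanly I would reindex the rows from the top by writing $j = k-1-a$ for $a = 1,\ldots,k-1$; then row $k-1-a$ contains exactly $2a$ beads, and the number of beads strictly above it is $2+4+\cdots+2(a-1) = a(a-1)$. Hence the beads in row $k-1-a$ receive global ranks $i = a(a-1)+s$ for $s = 1,\ldots,2a$, with the $s$-th one sitting at position $h_{a(a-1)+s} = 2k(k-1-a) + (2k-(k-1-a)-2) - (s-1) = 2k^2 - k - 2ka + a - s$. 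A direct substitution into $\lambda_i = h_i - r + i$ telescopes away the dependence on $s$ and yields $\lambda_{a(a-1)+s} = (k-a)^2$, so every bead in row $k-1-a$ contributes the same part $(k-a)^2$.

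Re-parametrizing by $j = k-a$, the part $j^2$ then appears exactly $2(k-j)$ times for $j = 1,\ldots,k-1$, which is precisely the claimed exponential form. As a consistency check I would verify that $2\sum_{j=1}^{k-1} j^2(k-j)$ simplifies to $k^2(k^2-1)/6$, matching $|\kappa_{2k-1,2k,2k+1}|$ from (\ref{mainRHS}). The main obstacle is the bookkeeping that pins down the correct global rank of each bead in the descending hook-length list; once that indexing is set, the rest collapses to a single algebraic simplification. An alternative route by induction using Lemma \ref{append}, growing $\bar{\alpha}(2k)$ from $\bar{\alpha}(2k-2)$ by appending columns of spacers and a new bottom row of $2k-2$ beads, is available but appears messier because one would need to track how each previously established part is shifted under the renumbering of the abacus.
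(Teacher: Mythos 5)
Your proposal is correct, and I verified the key computation: with $r=k(k-1)$ beads, the bead of global rank $i=a(a-1)+s$ sits at $h_i=2k^2-k-2ka+a-s$, and $\lambda_i=h_i-r+i=(k-a)^2$ indeed telescopes, giving part $j^2$ with multiplicity $2(k-j)$. However, this is a genuinely different route from the paper's. The paper proceeds by induction on $k$: it uses Lemma \ref{append} to build $\bar{\alpha}(2k)$ from $\bar{\alpha}(2k-2)$ by appending a bottom row of $2k-2$ beads (which become the $2k-2$ new parts of size $1$) and flanking columns of spacers, and then invokes Proposition \ref{prop1} (consecutive triangular numbers summing to a square) to see that each previously established square part grows to the next square under the shift. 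Your direct computation buys self-containedness and explicitness --- it never needs the inductive hypothesis, and the only nontrivial step is the rank bookkeeping, which you handle correctly; it also yields the size formula as a free consistency check. The paper's induction buys thematic economy: it reuses Lemma \ref{append} and Proposition \ref{prop1}, both of which are needed elsewhere in the combinatorial dissection argument, and it avoids the explicit position/rank arithmetic entirely (at the cost of a somewhat terse justification of how each old part is promoted to the next square). Either proof is acceptable; yours is arguably the more verifiable of the two, though your own closing remark that the inductive route ``appears messier'' is the opposite of the paper's experience.
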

\begin{proof}
We prove this by induction on $k\geq2.$ When $k=2,$ the maximal $(3,4,5)$-core $\kappa_{3,4,5}$ equals $(1^2).$  

Suppose the result holds for $k-1$. By the inductive hypothesis, we know that 
\[
\kappa_{2k-3,2k-2,2k-1}=\left(\left\{(k-2)^2\right\}^2, \left\{(k-3)^2\right\}^4, \dots, 16^{2k-10}, 9^{2k-8}, 4^{2k-6}, 1^{2k-4}\right).
\]
By Lemma \ref{append}, we know we can move from $\bar{\alpha}(2k-2)$ to $\bar{\alpha}(2k)$ by adding a layer of ($2k-2$) beads below, adding the necessary spacers, and relabeling.  Then $\kappa_{2k-1,2k,2k+1}$ is comprised of the union of parts that arise from the beads of $\bar{\alpha}(2k)$ shifted up, which are now $\left(\left\{(k-1)^2\right\}^2, \left\{(k-3)^2\right\}^4, \dots, 16^{2k-8}, 9^{2k-6}, 4^{2k-4}\right)$ by Proposition \ref{prop1}, and $2k-2$ new parts of size 1 arising from the new row of beads.
\end{proof}
See Figure 6 for the Young diagram of $\kappa_{7,8,9}$ which follows from Theorem \ref{young-diagram-triple-catalan}.  
{\scriptsize
\begin{center}
\begin{figure}[h1]
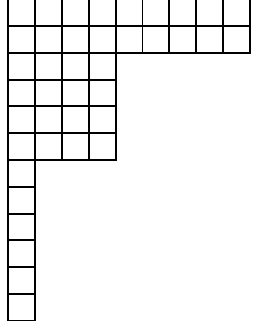

\label{young-diagram-789}
\hspace{-1cm}
\caption{Young diagram of  $\kappa_{7,8,9}$}
\Yvcentermath1
$\yng(9,9,4,4,4,4,1,1,1,1,1,1)$
\end{figure}
\end{center}
}

We are now in a position to develop a proof of Theorem \ref{main_result}.  However, rather than proving that theorem as it is written, we consider proving an equivalent form.  Namely, since the $2k$-quotient of $\kappa_{2k-1,2k+1}$ actually contains two copies of the sequence 
$$
 \tau_{k-1}, \tau_{k-2}, \dots, \tau_1, \tau_0,
$$
it makes sense to only speak of this first half of the quotient.  Thus, we define $Q_{2k-1,2k+1}$ as 
$$
Q_{2k-1,2k+1}:= \tau_{k-1}, \tau_{k-2}, \dots, \tau_1, \tau_0.
$$
Then, since $ \vert\, \kappa_{2k-1,2k+1}\,\vert $ equals $2k$ times the size of the corresponding $2k$-quotient, we see that Theorem \ref{main_result} is equivalent to 
$$
(2k)(2\vert\, Q_{2k-1,2k+1}\, \vert) = 4\vert\, \kappa_{2k-1,2k,2k+1}\,\vert, 
$$
which is equivalent to 
$$
k\vert\, Q_{2k-1,2k+1}\, \vert = \vert\, \kappa_{2k-1,2k,2k+1}\,\vert.
$$
Thus, our goal now is to prove this result, which we now write as its own theorem. 

\begin{theorem}
\label{main_result_modified}
For all $k\geq 1,$ 
$$
k\vert\, Q_{2k-1,2k+1}\, \vert = \vert\, \kappa_{2k-1,2k,2k+1}\,\vert.
$$
\end{theorem}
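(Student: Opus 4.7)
The approach is direct: combine Theorem~\ref{young-diagram-triple-catalan} for the shape of $\kappa_{2k-1,2k,2k+1}$ with Proposition~\ref{prop1} to dissect each row into two triangular-number segments, then compare the resulting expression to $k|Q_{2k-1,2k+1}|$.

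First, Theorem~\ref{young-diagram-triple-catalan} tells us that the Young diagram of $\kappa_{2k-1,2k,2k+1}$ has exactly $2(k-m)$ rows of length $m^2$ for $m = 1, \dots, k-1$, so
\[
|\kappa_{2k-1,2k,2k+1}| \;=\; \sum_{m=1}^{k-1} 2(k-m)\, m^2.
\]
Next, I apply Proposition~\ref{prop1} to split each row of length $m^2$ into a long half of size $T_m$ and a short half of size $T_{m-1}$. Expanding the sum and re-indexing ($j=m$ for the long halves, $j=m-1$ for the short halves), and then using $T_0=0$ to align the summation ranges, collapses the total into
\[
|\kappa_{2k-1,2k,2k+1}| \;=\; 2\sum_{j=0}^{k-1} (2k-2j-1)\, T_j.
\]

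Since $|Q_{2k-1,2k+1}| = \sum_{j=0}^{k-1} T_j$, the statement of Theorem~\ref{main_result_modified} reduces to the identity
\[
2\sum_{j=0}^{k-1} (2k-2j-1)\, T_j \;=\; k \sum_{j=0}^{k-1} T_j,
\]
equivalently $\sum_{j=0}^{k-1} (3k-4j-2)\, T_j = 0$. I would verify this by substituting $T_j = j(j+1)/2$ and invoking the standard closed forms for $\sum j$, $\sum j^2$, and $\sum j^3$; all contributions conspire to cancel.

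The main obstacle is that the final algebraic step is not itself a bijection between cells. A more satisfyingly combinatorial finish would proceed by induction on $k$ using Lemma~\ref{append}: the bottom row of $2k-2$ beads newly appended to $\bar{\alpha}(2k-2)$ accounts for $2(k-1)$ fresh length-1 rows in the Young diagram, while the relabeling that shifts the preexisting beads upward enlarges each row of length $m^2$ to length $(m+1)^2$ (gaining $2m+1$ cells by Proposition~\ref{prop1}). A short calculation shows that the resulting increment in $|\kappa_{2k-1,2k,2k+1}|$ is exactly $|Q_{2k-3,2k-1}| + k T_{k-1}$, which matches the increment in $k|Q_{2k-1,2k+1}|$, closing the induction.
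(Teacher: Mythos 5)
Your proposal is correct, but your primary route is genuinely different from the paper's. You read off $|\kappa_{2k-1,2k,2k+1}|=\sum_{m=1}^{k-1}2(k-m)m^2$ from Theorem~\ref{young-diagram-triple-catalan}, split each $m^2$ via Proposition~\ref{prop1}, and reduce the claim to the polynomial identity $\sum_{j=0}^{k-1}(3k-4j-2)T_j=0$, which indeed checks out against the closed forms for $\sum j$, $\sum j^2$, $\sum j^3$. The paper instead builds an explicit cell-level bijection: it dissects $k$ copies of $Q_{2k-1,2k+1}$ into three subpartitions, namely $(T_{k-1}^{\,k-1})$, $(T_{k-1},\dots,T_1)$, and $k-1$ copies of $Q_{2k-3,2k-1}$; the third piece maps inductively onto the copy of $\kappa_{2k-3,2k-2,2k-1}$ sitting inside $\kappa_{2k-1,2k,2k+1}$, and the first two are matched with the two extra copies of each square part using Proposition~\ref{prop1} to split squares into consecutive triangles and Proposition~\ref{prop2} to absorb the leftover triple count $3(T_1+\cdots+T_{k-2})=(k-2)T_{k-1}$. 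Your algebraic version is shorter and self-contained, but it forfeits the bijection that is the paper's stated goal (a combinatorial explanation of the factor of $4$). Your sketched inductive alternative via Lemma~\ref{append} is in fact essentially the paper's argument in numerical disguise: your increment $|Q_{2k-3,2k-1}|+kT_{k-1}$ is exactly the combined size of the paper's Parts 1 and 2, and the ``short calculation'' you defer is precisely Proposition~\ref{prop2}; making that increment into an explicit cell correspondence is all that separates your second route from the published proof.
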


Our combinatorial proof will involve showing how the cells in $k$ copies of $Q_{2k-1,2k+1}$ can be placed in one--to--one correspondence with the cells of $\kappa_{2k-1,2k,2k+1}.$   
In order to complete such a proof, we will require the following proposition:  

\begin{proposition}
\label{prop2}
For any $n\geq 0,$ 
$$
3(T_1+T_2+\dots +T_n) = nT_{n+1}.
$$
\end{proposition}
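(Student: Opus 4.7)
The plan is to prove this purely algebraic identity by induction on $n$, since it is a standard manipulation of triangular numbers and the paper's prior results already place us in exactly the right setting. The base case $n=0$ reads $0=0$, which is immediate. For the inductive step, I would add $3T_{n+1}$ to both sides of the induction hypothesis $3(T_1+\dots+T_n)=nT_{n+1}$ so that the left side becomes $3(T_1+\dots+T_{n+1})$ and the right side becomes $(n+3)T_{n+1}$. It then suffices to check that $(n+3)T_{n+1}=(n+1)T_{n+2}$, and this is a one-line calculation from the definition $T_j=j(j+1)/2$: both sides equal $(n+1)(n+2)(n+3)/2$.

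Alternatively, in keeping with the combinatorial spirit of Proposition \ref{prop1}, I could recast the identity using $T_j=\binom{j+1}{2}$, so that the claim becomes $3\binom{n+2}{3}=nT_{n+1}$ via the ``hockey stick'' summation $T_1+T_2+\dots+T_n=\binom{n+2}{3}$. The hockey stick identity itself has a clean bijective proof by partitioning the $3$-element subsets $\{a<b<c\}\subseteq\{1,2,\dots,n+2\}$ according to the value of the largest element $c=j+2$, which contributes $\binom{j+1}{2}=T_j$ subsets for each $j=1,\dots,n$. The factor of $3$ on the left then matches $n T_{n+1}=n(n+1)(n+2)/2=3\binom{n+2}{3}$ after a direct comparison.

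The main obstacle is essentially nonexistent: this is a classical identity with several short proofs, and it requires no information about cores, abaci, or any of the earlier machinery. The only genuine decision is stylistic — whether to present a brief inductive algebraic argument or to write a ``proof without words'' in the style used for Proposition \ref{prop1} (for instance, by tiling $n$ copies of a staircase of height $n+1$ with three staircases of sizes $T_1,\dots,T_n$). Given that Proposition \ref{prop2} will be used as a numerical bookkeeping lemma in the bijection for Theorem \ref{main_result_modified}, the short inductive proof is probably the cleanest choice.
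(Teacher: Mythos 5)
Your proof is correct and matches the paper's approach: the paper itself merely remarks that the identity ``can be proved via induction on $n$'' and defers a visual proof to a citation of Zerger, so your worked-out induction (and the hockey-stick alternative) simply supplies the details the paper omits. Both your inductive step and the identification $T_1+\dots+T_n=\binom{n+2}{3}$ check out.
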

\begin{proof}
Again, one can prove the above via induction on $n.$  However, in keeping with the combinatorial spirit, the interested reader is directed to the work of Zerger \cite{Zer} for a visual proof of this result.  
\end{proof}

We now possess all the tools necessary to prove Theorem \ref{main_result_modified}.  

\begin{proof} (of Theorem \ref{main_result_modified})
We begin with $k$ copies of $Q_{2k-1,2k+1},$ which can be seen (as a partition) as 
$$
(T_{k-1}^k, T_{k-2}^k, \dots, T_3^k, T_2^k, T_1^k).  
$$
We break this partition into three partitions as follows:  
$$
(T_{k-1}^{k-1})\cup (T_{k-1}, T_{k-2}, \dots, T_3, T_2, T_1) \cup (T_{k-2}^{k-1}, \dots, T_3^{k-1}, T_2^{k-1}, T_1^{k-1})
$$
We will refer to the three subpartitions mentioned above as Part 1, Part 2, and Part 3, respectively.  Now we show how each of these three parts correspond in a natural way to portions of the Young diagram of $\kappa_{2k-1,2k,2k+1}.$  

First, note that Part 3, which is by far the ``largest'' of the three parts, is actually made up of $k-1$ copies of $Q_{2(k-1)-1,2(k-1)+1}$ or $Q_{2k-3,2k-1}.$  Thus, by induction, Part 3 can be placed in one--to--one correspondence with $\kappa_{2k-3,2k-2,2k-1}.$  Note that, in the Young diagram, $\kappa_{2k-3,2k-2,2k-1}$ naturally lives inside $\kappa_{2k-1,2k,2k+1};$ indeed, $\kappa_{2k-1,2k,2k+1}$ contains a copy of $\kappa_{2k-3,2k-2,2k-1}$ along with two additional copies of parts of size $1, 4, 9, \dots, (k-1)^2.$  Thus, we now have to see how these two copies of each of the square parts correspond to Parts 1 and 2 mentioned above.  

Next, we consider Part 2 which is $ (T_{k-1}, T_{k-2}, \dots, T_3, T_2, T_1).$  These cells in $Q_{2k-1,2k+1}$ are naturally placed in one--to--one correspondence with the portion of the Young diagram of $\kappa_{2k-1,2k,2k+1}$ which consists of one copy of the largest square part, one copy of the third largest square part, one copy of the fifth largest part, and so on.  This correspondence is clear from Proposition \ref{prop1} above.  The correspondence is clear since each such square part from $\kappa_{2k-1,2k,2k+1}$ can be split into the sum of two consecutive triangular numbers, and since we have selected one copy of {\bf every other} square part from $\kappa_{2k-1,2k,2k+1},$ the split versions into triangular numbers will give us a collection of one copy of each triangular number.  And this is the structure of Part 2.  

Finally, we must consider Part 1 from the dissection of $Q_{2k-1,2k+1}.$  Note that Part 1 consists of exactly $k-1$ copies of the triangular number $T_{k-1}.$  In order for us to complete the one--to--one correspondence which will provide the proof of this theorem, it must be the case that the cells in Part 1 correspond to the cells that remain from $\kappa_{2k-1,2k,2k+1}. $   This entails one copy of the largest (square) part, two copies of the second largest part, one copy of the third largest part, two copies of the fourth largest part, and so on.  In order to see that these cells from the square parts correspond to Part 1, we again use Proposition \ref{prop1} to split each such square part into the sum of two consecutive triangular parts.  Thus, the (single) largest square part, which is of size $(k-1)^2,$ will be split into $T_{k-1} + T_{k-2}.$  The $T_{k-1}$ accounts for one of the $k-1$ parts of size $T_{k-1}$ in Part 1.  That leaves $k-2$ parts of size $T_{k-1}$ which are not yet accounted for.  But the remaining triangular pieces from the splitting of the squares mentioned above produce a sum of three copies of each triangular number (since we had one copy of the largest square part, two copies of the second largest part, one copy of the third largest part, etc.).  The number of cells here is given by 
$$
3(T_1+T_2+\dots +T_{k-2})
$$
and this quantity equals $(k-2)T_{k-1}$ thanks to Proposition \ref{prop2}.  This is exactly the same number of cells that we needed, and the proof is complete.  
\end{proof}
\section{Examples}
With the goal of making the verbal description of the mapping above much more clear, we show how the mapping works in two specific cases.  
\subsection{The case $k=4$}
%\begin{example}
The 4 copies of $Q_{7,9}$ can be seen as follows:  
{\scriptsize
\begin{center}
\begin{figure}[h1]
\Yvcentermath1
$\yng(3,2,1)\hspace{0.5in}\yng(2,1)\hspace{0.5in}\yng(1)$

\vspace{0.2in}

$\yng(3,2,1)\hspace{0.5in}\yng(2,1)\hspace{0.5in}\yng(1)$

\vspace{0.2in}

$\yng(3,2,1)\hspace{0.5in}\yng(2,1)\hspace{0.5in}\yng(1)$

\vspace{0.2in}

$\yng(3,2,1)\hspace{0.5in}\yng(2,1)\hspace{0.5in}\yng(1)$
\end{figure}
\end{center}
} %end of scriptsize
%\end{example}
%\begin{example}
The three ``Parts'' as described in the proof of Theorem \ref{main_result_modified}, that is, $T^3_3$, $(T_3,T_2,T_1)$, and $(T^3_2,T^3_1)$ can be seen as follows:  
{\scriptsize
\begin{center}
\Yvcentermath1
$\yng(3,2,1)\hspace{0.5in}\yng(2,1)\hspace{0.5in}\yng(1)$

\vspace{0.2in}

$\young(\bullet\bullet\bullet,\bullet\bullet,\bullet)\hspace{0.5in}\young(**,*)\hspace{0.5in}\young(*)$

\vspace{0.2in}

$\young(\bullet\bullet\bullet,\bullet\bullet,\bullet)\hspace{0.5in}\young(**,*)\hspace{0.5in}\young(*)$

\vspace{0.2in}

$\young(\bullet\bullet\bullet,\bullet\bullet,\bullet)\hspace{0.5in}\young(**,*)\hspace{0.5in}\young(*)$

\end{center}
}
%\end{example} %end of scriptsize
The cells filled with bullets {\scriptsize $\young(\bullet)$} make up Part 1, the empty cells {\scriptsize  $\yng(1)$} make up Part 2, and the cells filled with asterisks {\scriptsize  $\young(*)$} make up Part 3.  
%\begin{example}
Next, we show the Young diagram for $\kappa_{7,8,9}$ with the cells corresponding to Parts 1, 2, and 3 above labelled in similar fashion:  
{\scriptsize
\begin{center}
\Yvcentermath1
$\young(\ \ \ \ \ \ \ \ \ ,\bullet\bullet\bullet\bullet\bullet\bullet\bullet\bullet\bullet,\bullet\bullet\bullet\bullet,\bullet\bullet\bullet\bullet,****,****,\ ,\bullet,*,*,*,*)$
\end{center}
}
%\end{example}
\subsection{The case $k=5$}
%\begin{example} 
The three ``Parts'' as described in the proof of Theorem \ref{main_result_modified} can be seen as follows:  
{\scriptsize
\begin{center}
\Yvcentermath1
$\yng(4,3,2,1)\hspace{0.5in}\yng(3,2,1)\hspace{0.5in}\yng(2,1)\hspace{0.5in}\yng(1)$

\vspace{0.2in}

$\young(\bullet\bullet\bullet\bullet,\bullet\bullet\bullet,\bullet\bullet,\bullet)\hspace{0.5in}\young(***,**,*)\hspace{0.5in}\young(**,*)\hspace{0.5in}\young(*)$

\vspace{0.2in}

$\young(\bullet\bullet\bullet\bullet,\bullet\bullet\bullet,\bullet\bullet,\bullet)\hspace{0.5in}\young(***,**,*)\hspace{0.5in}\young(**,*)\hspace{0.5in}\young(*)$

\vspace{0.2in}

$\young(\bullet\bullet\bullet\bullet,\bullet\bullet\bullet,\bullet\bullet,\bullet)\hspace{0.5in}\young(***,**,*)\hspace{0.5in}\young(**,*)\hspace{0.5in}\young(*)$

\vspace{0.2in}

$\young(\bullet\bullet\bullet\bullet,\bullet\bullet\bullet,\bullet\bullet,\bullet)\hspace{0.5in}\young(***,**,*)\hspace{0.5in}\young(**,*)\hspace{0.5in}\young(*)$
\end{center}
} %end of scriptsize
%\end{example}
\vspace{1.0in}
%\begin{example} 
Next, we show the Young diagram for $\kappa_{9,10,11}$ with cells corresponding to Parts 1,2, and 3 labelled as above.
{\scriptsize
\begin{center}
\Yvcentermath1
$\young(\ \ \ \ \ \ \ \ \ \ \ \ \ \ \ \ ,\bullet\bullet\bullet\bullet\bullet\bullet\bullet\bullet\bullet\bullet\bullet\bullet\bullet\bullet\bullet\bullet,\bullet\bullet\bullet\bullet\bullet\bullet\bullet\bullet\bullet,\bullet\bullet\bullet\bullet\bullet\bullet\bullet\bullet\bullet,*********,*********,\ \ \ \ ,\bullet\bullet\bullet\bullet,****,****,****,****,\bullet,\bullet,*,*,*,*,*,*)$
\end{center}
}
%\end{example}
\section{Closing Thoughts} 
We close by discussing some of the limiting factors in extending results such as Theorem \ref{main_result}.

First, note that similar divisibility cannot occur in the case of $(2k,2k+2)$-cores and $(2k,2k+1,2k+2)$-cores. For although Amdeberhan \cite{Am} has conjectured a value for $\vert\, \kappa_{2k,2k+1,2k+2} \,\vert$ (which has been verified by both Yang, Zhong, Zhou \cite{Y-Z-Z} and Xiong \cite{X}), a maximal $(2k,2k+2)$-core {\bf does not exist}. The lack of such a maximal core follows from the following more general result,  proved via generating functions in \cite{AKS} and from an abacus viewpoint in \cite{K}.
\begin{proposition} \label{nomax} Let gcd$(s,t)>1$. Then there are infinitely many simultaneous $(s,t)$-cores.
\end{proposition}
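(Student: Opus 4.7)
The plan is to reduce the existence of infinitely many $(s,t)$-cores to the existence of infinitely many $d$-cores, where $d = \gcd(s,t) \geq 2$, via the standard hook-length reformulation of the core condition. Concretely, a partition $\lambda$ is a $t$-core if and only if no hook of $\lambda$ has length divisible by $t$: in the abacus picture of Section~1, a cell with hook length $kt$ corresponds to a bead on the $t$-abacus sitting directly above a spacer $k$ rows below on the same runner, which is incompatible with the $t$-core criterion. Verifying this reformulation rigorously from the bead-and-spacer definition used throughout the paper is the main obstacle I anticipate; once it is in hand, the rest of the argument is a direct abacus computation.

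Granting that reformulation, the first key step is almost immediate. If $\mu$ is a $d$-core then no hook length of $\mu$ is a multiple of $d$, and since both $s$ and $t$ are multiples of $d$, no hook length of $\mu$ is a multiple of $s$ or $t$ either. Hence every $d$-core is simultaneously an $(s,t)$-core, and it suffices to produce infinitely many $d$-cores.

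For the second key step, I would build an explicit infinite family using the $d$-abacus. For each $n \geq 1$, let $\mu_n$ be the partition whose $d$-abacus has $n$ beads placed in the bottom $n$ positions $1,\, d+1,\, 2d+1,\, \ldots,\, (n-1)d+1$ of runner~$1$, and no beads on any other runner. By construction no spacer lies below a bead on any runner, so $\mu_n$ is manifestly a $d$-core; reading the bead positions as first-column hook lengths and converting to parts gives the generalized staircase
$$
\mu_n = \bigl((n-1)(d-1)+1,\; (n-2)(d-1)+1,\; \ldots,\; d,\; 1\bigr),
$$
whose length is $n$. Since the $\mu_n$ have pairwise distinct sizes, the family is infinite, and the proposition follows.
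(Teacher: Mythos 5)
Your argument is correct and complete. Note, however, that the paper itself offers no proof of this proposition: it simply cites \cite{AKS} for a generating-function proof and \cite{K} for an abacus proof, so there is no internal argument to compare yours against. Your route is a self-contained abacus argument in the spirit of \cite{K}: the reduction ``$\lambda$ is a $t$-core iff no hook length of $\lambda$ is divisible by $t$'' follows exactly as you say from the paper's own characterization (no spacer below a bead on any runner of the $t$-abacus) together with the standard bijection between cells of $\lambda$ and pairs (bead at $b$, spacer at $c<b$) with hook length $b-c$; this immediately gives that every $d$-core with $d=\gcd(s,t)$ is an $(s,t)$-core, and your explicit family $\mu_n$ with beads at $1, d+1, \ldots, (n-1)d+1$ on runner $1$ is manifestly a $d$-core of strictly increasing size, so the family is infinite. (For $d=2$ your $\mu_n$ recovers the staircases $\tau_n$ that the paper highlights as the $2$-cores.) One small remark: you could bypass the general hook-length reformulation entirely by checking directly that $\mu_n$ has no hook of length $s$ or $t$ --- a hook of length $ed$ would require a bead at $b$ and a spacer at $b-ed$ on the same runner, but every position on runner $1$ below a bead of $\mu_n$ is itself a bead --- which makes the proof even more elementary. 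What your approach buys over the paper's citations is an explicit infinite family and a proof that uses only the machinery already set up in Section~1; what the generating-function proof of \cite{AKS} buys instead is quantitative information about how many $(s,t)$-cores there are of each size.
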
 

There are also constraints on the kinds of simultaneous core partitions one can examine for more than two distinct integers. For example, we note the following: 

\begin{proposition} 
\label{propKX}
Let $s_1,s_2,s_3,\cdots$ be a sequence of positive integers. Then the number of simultaneous $(s_1,s_2,s_3,\cdots)$-cores is finite if and only if gcd$(s_1,s_2,s_3,\cdots)=1.$
\end{proposition}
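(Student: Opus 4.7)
The plan is to prove both directions using the correspondence between $t$-cores and beta-sets together with the Sylvester--Frobenius theory of numerical semigroups. The main classical ingredient I will use is the standard refinement that a partition $\lambda$ is a $t$-core if and only if no hook length of $\lambda$ is divisible by $t$; equivalently, for any $N$ exceeding the number of parts of $\lambda$, the beta-set $B_N(\lambda) = \{\lambda_i + N - i : 1 \le i \le N\}$ is closed under the operation $b \mapsto b - t$ whenever $b \ge t$.

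For the direction $(\Rightarrow)$ I argue the contrapositive. Suppose $\gcd(s_1, s_2, \ldots) = d \ge 2$. The idea is that every $d$-core is automatically a simultaneous $(s_1, s_2, \ldots)$-core: by the hook-length criterion a $d$-core has no hook of length divisible by $d$, so since $d \mid s_i$ for each $i$, it has no hook of length $s_i$. Finiteness of simultaneous cores would therefore force only finitely many $d$-cores, which fails for every $d \ge 2$---a standard fact, visible for example from the non-polynomiality of the generating series $\prod_{n \ge 1}(1 - q^{dn})^d/(1 - q^n)$, or, more concretely in the case $d = 2$, from the infinite family $\tau_0, \tau_1, \tau_2, \ldots$ of staircase $2$-cores.

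For the main direction $(\Leftarrow)$, suppose $\gcd(s_1, s_2, \ldots) = 1$. First I reduce to a finite subset $\{s_{i_1}, \ldots, s_{i_m}\}$ with the same gcd $1$, which exists because the gcd of the initial segments of the sequence is a nonincreasing sequence of positive integers and must stabilize at $1$. By Sylvester's theorem the numerical semigroup $S = \langle s_{i_1}, \ldots, s_{i_m} \rangle$ contains every integer at least some Frobenius bound $F$. For any simultaneous core $\lambda$, iterating the closure of $B_N(\lambda)$ under subtraction by each $s_{i_j}$ yields closure under subtraction by every element of $S$. Applying this to $M := \max B_N(\lambda) = \lambda_1 + N - 1$, provided $M \ge F$, the values $M - s$ for $s \in S \cap [0, M]$ give $M - F + 2$ distinct elements of $B_N(\lambda)$, which against $|B_N(\lambda)| = N$ forces $\lambda_1 \le F - 1$. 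Since hook lengths are preserved under conjugation, the conjugate $\lambda'$ is also a simultaneous core, and applying the same bound to $\lambda'$ yields the number of parts $r(\lambda) = \lambda'_1 \le F - 1$ as well. Hence $\lambda$ fits inside an $(F-1)\times(F-1)$ rectangle, and there are only finitely many such partitions.

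The main technical point to handle carefully is the Frobenius-counting step: one must verify that $\{M\} \cup \{0, 1, \ldots, M - F\}$ really is a disjoint set of size $M - F + 2$ sitting inside $B_N(\lambda)$, and that the closure under subtraction stays valid at the boundary (using that $B_N(\lambda)$ automatically contains the initial segment $\{0, 1, \ldots, N - r - 1\}$ for $N > r = r(\lambda)$). All other ingredients---the hook-length characterization of $t$-cores, invariance of the hook-length multiset under conjugation, and the existence of a Frobenius bound for a semigroup of gcd $1$---are classical and will be used off the shelf.
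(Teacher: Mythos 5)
The paper offers no proof of this proposition---it is stated and deferred to \cite{K} and \cite{X}---so there is nothing internal to compare against; your argument stands or falls on its own, and it stands. Both directions are sound. The ``only if'' direction correctly rests on the refinement that a $d$-core has no hook of length divisible by $d$, so every $d$-core is a simultaneous core when $d=\gcd(s_1,s_2,\dots)\ge 2$; this is exactly the mechanism behind the paper's Proposition \ref{nomax}, which it likewise cites to \cite{AKS} and \cite{K} rather than proving. The ``if'' direction---reduce to a finite subsequence of gcd $1$, use closure of the beta-set under subtraction of semigroup elements, count $\{M\}\cup\{0,1,\dots,M-F\}$ inside a set of size $N$ to get $\lambda_1\le F-1$, then conjugate to bound the number of parts---is the standard abacus/Frobenius proof and is essentially the argument of \cite{K}; all the delicate points (intermediate values staying nonnegative when subtracting generators one at a time, the disjointness of $\{M\}$ from the initial segment, invariance of hook lengths under conjugation) are handled or at least flagged correctly. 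The one soft spot is your justification that there are infinitely many $d$-cores for every $d\ge 2$: the appeal to ``non-polynomiality of the generating series'' is asserted rather than shown, and your explicit family covers only $d=2$. This is easily repaired with an explicit witness for general $d$: the stretched staircases $\bigl(n(d-1),(n-1)(d-1),\dots,(d-1)\bigr)$ have beta-set $\{d-1,2d-1,\dots,nd-1\}$, all beads flush on a single runner of the $d$-abacus, hence are $d$-cores for every $n$. With that one sentence added, the proof is complete.
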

A proof of Proposition \ref{propKX} can be found in \cite{K} and \cite{X}.
Even when gcd$(s_1,s_2,s_3)$=1, it is, in general, a hard problem in the theory of numerical semigroups to capture the size of the maximal $(s_1,s_2,s_3)$-core. However, as more information is gained about such sizes, the authors hope additional cases of divisibility will be discovered. 

\section{Acknowledgements}
The first author thanks George Andrews for supporting his visit to Pennsylvania State University in November 2014 where this research began. He also thanks Henry Cohn and Massachusetts Institute of Technology as a visitor in March and April 2015.  The first author was supported by grant PSC-CUNY TRADA-46-493 and a 2014-2015 Faculty Fellowship Leave from York College, City University of New York.

\end{document}